\documentclass[a4paper,12pt]{article}
\usepackage[utf8]{inputenc}
\usepackage[T2A]{fontenc}
\usepackage{geometry}
\usepackage{amsmath}
\usepackage{amssymb}
\usepackage{graphicx}
\usepackage{setspace}
\usepackage{indentfirst}
\usepackage{hyperref}
\usepackage{arydshln}
\usepackage{amsthm}
\newsavebox{\Amat}

\usepackage{mathrsfs} 

\newtheorem{lemma}{Lemma}
\newtheorem{corollary}{Corollary}
\newtheorem{theorem}{Theorem}

\newtheorem{remark}{Remark}

\begin{document} 
	\begin{center}
    \LARGE \textbf{On exact discretization of the $L_2$-norm in the space spanned by the first $N$ Rademacher functions}
\end{center}

\begin{center} \Large
Anna Kazakova \footnote{Lomonosov Moscow State University, anna.kazakova@math.msu.ru}
\end{center}

\textbf{Keywords}: Exact Discretization, $L_2$-Norm, Negative Weight, Rademacher functions

\begin{center} \textbf{Abstract}

We study the exact discretization of the $L_2$-norm in the space spanned by the first $N$ Rademacher functions. It is shown that the sufficient number of nodes for discretization with the minimal number of nodes is equal to $N$ or $N+1$ and depends on the dimension $N$. The connection with Hadamard matrices and the Hadamard conjecture is demonstrated.
\end{center}
\vspace{0.5em}
This paper is devoted to the problem of exact discretization of the $L_2$-norm. Let $\Omega \subset \mathbb{R}$ be a compact subset and $\mu$ a finite measure on $\Omega$. Let $X_N \subset L_2(\Omega, \mu)$ be an $N$-dimensional subspace. We say that $X_N$ admits an \emph{exact Marcinkiewicz-type discretization theorem with weights} with parameters $m$ and $2$ if there exist a set of points $\{\xi^j\}_{j=1}^m \subset \Omega$ and a set of weights $\{\lambda_j\}_{j=1}^m$ such that for every function $f \in X_N$ the following identity holds:
\begin{equation}
\int_\Omega f^2 \, d\mu = \sum_{j=1}^m \lambda_j f^2(\xi^j).
\label{eq:discretization}
\end{equation}

For brevity, this property is denoted by $X_N \in \mathscr{M}^w(m, 2, 0)$. If all weights $\lambda_j$ in~\eqref{eq:discretization} are nonnegative, we write $X_N \in \mathscr{M}_+^w(m, 2, 0)$.

According to the result from~\cite[Theorem~3.1]{1}, we have the inclusion $X_N \in \mathscr{M}^w(N(N+1)/2, 2, 0)$. We study the case of the minimal possible number of points, that is, the quantities
\begin{equation}
m(X_N) := \min\{m : X_N \in \mathscr{M}^w(m, 2, 0)\} \quad \text{and} \quad 
m_+(X_N) := \min\{m : X_N \in \mathscr{M}_+^w(m, 2, 0)\}.
\label{eq:min_points}
\end{equation}

A hypothesis was formulated in~\cite{1} stating that if there exist points $\{\xi^j\}_{j=1}^{m(X_N)} \subset \Omega$ and weights $\{\lambda_j\}_{j=1}^{m(X_N)}$ such that equality~\eqref{eq:discretization} holds for every function $f \in X_N$, then all weights are positive: $\lambda_j > 0$ for $j = 1, \ldots, m(X_N)$. This hypothesis was disproved in~\cite{2}, where examples of special subspaces were constructed for which negative weights are necessary when using the minimal number of points.

In the present work, we consider the classical Rademacher functions and the subspaces $\mathbf{R}_N = \langle r_0, \ldots, r_{N-1} \rangle$ generated by them. Every function from $\mathbf{R}_N$ has the form $f = \sum_{i=0}^{N-1} a_i r_i$. 

For completeness, we recall the definition of the Rademacher functions. The interval $[0, 1)$ is partitioned into $2^N$ dyadic intervals of length $2^{-N}$:
\begin{equation*}
\Delta_k = \left[\frac{k}{2^N}, \frac{k+1}{2^N}\right), \quad k = 0, 1, \ldots, 2^N - 1.
\end{equation*}
Any such interval is uniquely identified by a binary sequence $(\delta_1, \ldots, \delta_N) \in \{0, 1\}^N$, where $\delta_j$ is the $j$-th binary coefficient of $k$ (i.e., $k = \sum_{j=1}^N \delta_j 2^{N-j}$).

The Rademacher functions $r_k(x)$ are defined on $[0, 1)$ by:
\begin{equation*}
r_k(x) = (-1)^m, \quad \text{for } x \in \left[\frac{m}{2^{k+1}}, \frac{m+1}{2^{k+1}}\right), \quad 
m = 0, 1, \ldots, 2^{k+1} - 1, \quad k = 0, 1, 2, \ldots.
\end{equation*}

For the subspace $\mathbf{R}_N$ to belong to $\mathscr{M}^w(m, 2, 0)$, it is necessary and sufficient that
\begin{equation}
\sum_{i=1}^m \lambda_i r_k(\xi^i) r_l(\xi^i) = 0 \quad \text{for } 0 \leq k < l < N, \qquad 
\sum_{i=1}^m \lambda_i = 1.
\label{eq:rademacher_condition}
\end{equation}

Consider a matrix system:
\begin{equation} \label{2}
    A \Lambda A^{T} = E_N, \qquad \text{where}
\end{equation}

\begin{lrbox}{\Amat}
$\displaystyle
\left(
\begin{array}{ccccc}
r_0(\Delta_0) & \cdots &  r_0(\Delta_{2^N-1})  \\[5pt]
\vdots  & & \vdots \\[5pt]
r_{N-1}(\Delta_0) & \cdots &
r_{N-1}(\Delta_{2^N-1}) \\[5pt]
\end{array}
\right)
$
\end{lrbox}

\[
A :=
\underbrace{\usebox{\Amat}}_{2^N}
\qquad
\left.\vphantom{\usebox{\Amat}}\right\}
N, \quad  \Lambda = \mathrm{diag}[\lambda_0, \ldots, \lambda_{2^N-1}].
\]

System \eqref{2} has a solution $\lambda_0 = \ldots \lambda_{2^N-1} = \frac{1}{2^N}$, which corresponds to the following formula of the type \eqref{eq:discretization}:
\[
\int_0^1 f^2 = \sum_{i=0}^{2^N-1} \frac{1}{2^N} f^2(\xi^i), \quad \text{где $\xi^i \in \Delta_{i}$}.
\]
Our objective is to find a solution to \eqref{2} that maximizes the number of vanishing weights 
$\lambda_j = 0$. To this end, we select the dyadic intervals $\Delta_{j_1}, \ldots, \Delta_{j_m}$ 
corresponding to nonzero weights and retain only the associated columns of the matrices $A$ and 
$\Lambda$. Denoting these reduced matrices by $A_{\mathbf{j}}$ and $\Lambda_{\mathbf{j}}$, where 
$\mathbf{j} = (j_1,\ldots, j_m)$.  We observe that condition \eqref{eq:rademacher_condition} for 
points $\xi^i \in \Delta_{j_i}$ with weights $\lambda_i \equiv \lambda_{j_i}$ is equivalent 
to the matrix equation 
\begin{equation} \label{4}
A_{\mathbf{j}} \Lambda_{\mathbf{j}} A_{\mathbf{j}} ^{T} = E_N.
\end{equation}
\begin{lemma} \label{lemma-1}
If the matrix equation \eqref{4} holds, then $m \ge N$.
\end{lemma}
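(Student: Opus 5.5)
The plan is a rank argument. The matrix $A_{\mathbf{j}}$ has size $N\times m$ and $\Lambda_{\mathbf{j}}$ is the diagonal $m\times m$ matrix of the retained weights. Suppose \eqref{4} holds. Then the product $A_{\mathbf{j}}\Lambda_{\mathbf{j}}A_{\mathbf{j}}^T$ equals the identity $E_N$, so it has rank $N$.

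The key step is the elementary inequality for the rank of a product,
\[
\operatorname{rank}(BCD)\le \min\{\operatorname{rank} B,\operatorname{rank} C,\operatorname{rank} D\}.
\]
Applied to $B=A_{\mathbf{j}}$, $C=\Lambda_{\mathbf{j}}$ and $D=A_{\mathbf{j}}^T$, it gives
\[
N=\operatorname{rank}E_N\le \operatorname{rank}A_{\mathbf{j}}\le \min\{N,m\}.
\]
The last inequality holds because $A_{\mathbf{j}}$ has only $m$ columns. Hence $m\ge N$.

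Equivalently, one can phrase this through column spaces. The columns of $A_{\mathbf{j}}\Lambda_{\mathbf{j}}A_{\mathbf{j}}^T$ lie in the span of the $m$ columns of $A_{\mathbf{j}}$, which are the vectors $(r_0(\Delta_{j_i}),\dots,r_{N-1}(\Delta_{j_i}))^T$. If $m<N$, this span is a proper subspace of $\mathbb{R}^N$ and cannot contain all the standard basis vectors, which contradicts \eqref{4}.

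A function-space interpretation gives the same bound. If $m<N$, there is a nonzero vector $a=(a_0,\dots,a_{N-1})$ with $a^TA_{\mathbf{j}}=0$. This means $f=\sum a_i r_i$ vanishes at all nodes $\xi^i$. Then \eqref{4} gives $\|a\|^2=a^TA_{\mathbf{j}}\Lambda_{\mathbf{j}}A_{\mathbf{j}}^Ta=0$, which is a contradiction.

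There is no real obstacle here. The only point that needs care is that the weights may be negative, so positivity arguments cannot be used. The rank argument does not depend on the signs of the $\lambda_i$, so it covers this case.
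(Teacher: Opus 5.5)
Your rank argument is correct and matches the paper's proof, $N=\operatorname{rank}E_N=\operatorname{rank}(A_{\mathbf{j}}\Lambda_{\mathbf{j}}A_{\mathbf{j}}^T)\le\operatorname{rank}A_{\mathbf{j}}\le m$. Your column-space and null-vector reformulations are also valid, but they are not needed.
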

\begin{proof}
We have
\[
N
=
\operatorname{rank} E_N
=
\operatorname{rank}(A_{\mathbf{j}} \Lambda_{\mathbf{j}} A_{\mathbf{j}} ^{T})
\le
\operatorname{rank} A_{\mathbf{j}}
\le m.
\]
\end{proof}

\begin{theorem} \label{theorem-1}
    The condition $m(\mathbf{R}_N) = N$ is equivalent to the existence of a Hadamard matrix of order $N$. In this case, the points $\xi^i \in \Delta_{j_i}$, $i \in \{1, \ldots, N\}$ are such that $A_{\mathbf{j}}$ is a Hadamard matrix, and the weights $\lambda_1 = \ldots = \lambda_N = \frac{1}{N}$.
\end{theorem}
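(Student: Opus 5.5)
The proof has two directions. In both, write $H := A_{\mathbf{j}}$, an $N\times N$ matrix with entries $\pm 1$, and $\Lambda := \Lambda_{\mathbf{j}} = \mathrm{diag}[\lambda_1,\ldots,\lambda_N]$. By Lemma~\ref{lemma-1} we always have $m\ge N$, so $m(\mathbf{R}_N)=N$ means that \eqref{4} is solvable with exactly $m=N$ columns.

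\emph{Necessity.} Assume \eqref{4} holds with $m=N$. Then $H\Lambda H^T=E_N$, so $H$ and $\Lambda$ are invertible, and $\Lambda = H^{-1}(H^T)^{-1} = (H^TH)^{-1}$. Hence $H^TH=\Lambda^{-1}$ is diagonal. Its diagonal entries are the squared norms of the columns of $H$, which all equal $N$ because the entries are $\pm1$. Therefore $H^TH = N E_N$, so $H$ is a Hadamard matrix of order $N$, and $\Lambda^{-1}=NE_N$ gives $\lambda_i = 1/N$. This also proves the second assertion of the theorem: any minimal configuration has $A_{\mathbf{j}}$ Hadamard and all weights equal to $1/N$.

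\emph{Sufficiency.} Assume a Hadamard matrix of order $N$ exists. We must realize one as a column submatrix $A_{\mathbf{j}}$ of $A$. The key observation is that the columns of $A$ are all $2^N$ sign vectors in $\{\pm1\}^N$. The column indexed by $\Delta_k$, with $k=\sum\delta_j2^{N-j}$, has entries $r_{i}(\Delta_k)=(-1)^{\lfloor k/2^{N-i-1}\rfloor}$. Since $\lfloor k/2^{N-i-1}\rfloor \equiv \delta_{i+1} \pmod 2$, this equals $(-1)^{\delta_{i+1}}$. So the map $k\mapsto$ column is the bijection $\{0,1\}^N\to\{\pm1\}^N$, $\delta\mapsto((-1)^{\delta_1},\dots,(-1)^{\delta_N})$. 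Given a Hadamard matrix $H$ of order $N$, its columns are pairwise distinct, being orthogonal and nonzero. Each column therefore equals exactly one column of $A$, say the one for $\Delta_{j_i}$. Taking $\mathbf{j}=(j_1,\dots,j_N)$ gives $A_{\mathbf{j}}=H$. With $\Lambda_{\mathbf{j}}=\frac1N E_N$ we get $A_{\mathbf{j}}\Lambda_{\mathbf{j}}A_{\mathbf{j}}^T=\frac1N HH^T=E_N$, using that $H^TH=NE_N$ implies $HH^T=NE_N$. Combined with Lemma~\ref{lemma-1}, this gives $m(\mathbf{R}_N)=N$.

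The only step that needs care is the identification of the columns of $A$ with all of $\{\pm1\}^N$, which means checking the indexing convention between $r_k$ and the binary digits $\delta_j$. Everything else is linear algebra.
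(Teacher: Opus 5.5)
Your proof is correct and follows the same route as the paper. For necessity, a square $A_{\mathbf{j}}$ forces $\Lambda_{\mathbf{j}}^{-1}=A_{\mathbf{j}}^TA_{\mathbf{j}}$ to be diagonal with entries $N$, which gives a Hadamard matrix and weights $1/N$; for sufficiency, a Hadamard matrix with equal weights $1/N$ solves \eqref{4}. Your explicit check that the columns of $A$ run through all of $\{\pm1\}^N$ (via $r_i(\Delta_k)=(-1)^{\delta_{i+1}}$) justifies a step the paper leaves implicit when it simply ``takes $H_N$ as the matrix $A_{\mathbf{j}}$''.
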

\begin{proof}
   If the number of points $m = N$, then by Lemma \ref{lemma-1} this is the minimum number of points, i.e. $m(\mathbf{R}_N)$.
   Then $A_{\mathbf{j}}$ is a square matrix, and from
\[
A_{\mathbf{j}} \Lambda_{\mathbf{j}} A^{T}_\mathbf{j}= E_N
\]
it follows that $A_\mathbf{j}$ is non‑singular.

Multiplying on the left by $A^{-1}_\mathbf{j}$ and on the right by $(A^T)^{-1}_\mathbf{j}$, we obtain
\[
\Lambda_\mathbf{j}=A_\mathbf{j}^{-1}(A_\mathbf{j}^T)^{-1}.
\]
Therefore,
\[
\Lambda^{-1}_\mathbf{j}=A_\mathbf{j}^TA_\mathbf{j}.
\]

Since $\Lambda^{-1}_\mathbf{j}$ is diagonal, the columns of the matrix $A_\mathbf{j}$ must be pairwise orthogonal:
\[
(\Lambda^{-1}_\mathbf{j})_{kk} = \sum_{i=1}^{N} R_k^2(\Delta_{j_i}) = N, \quad (\Lambda_\mathbf{j}^{-1})_{lk} = \sum_{i=1}^{N}  R_l(\Delta_{j_i}) R_k(\Delta_{j_i}) = 0.
\]
Therefore,
\[
A^T_\mathbf{j}A_\mathbf{j}=NE_N,
\]
i.e. $A_\mathbf{j}$ is a Hadamard matrix.
Further, since
\[
\Lambda_\mathbf{j}^{-1}=NE_N,
\]
we have $\lambda_1 = \ldots = \lambda_N = \frac{1}{N}$.

Conversely, if $H_N$ is a Hadamard matrix of size $N$, then taking it as the matrix $A_\mathbf{j}$ and choosing equal weights $\frac{1}{N}$, we obtain that condition \eqref{4} holds for them.
\end{proof}

\begin{theorem} \label{theorem-2} An upper bound holds:
\begin{equation} \label{estimate-up}
    m(\mathbf{R}_N) \le  N+1.
\end{equation}
\end{theorem}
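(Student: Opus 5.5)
The plan is to exhibit explicitly $N+1$ dyadic intervals and real weights (negative ones allowed) for which \eqref{4} holds. Since $m(\mathbf{R}_N)$ is a minimum, this immediately gives \eqref{estimate-up}.

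First observation: the columns of $A$ run over \emph{all} sign vectors in $\{\pm1\}^N$. Indeed, on $\Delta_k$ with binary digits $(\delta_1,\ldots,\delta_N)$ we have $r_{i}(\Delta_k)=(-1)^{\delta_{i+1}}$, and every choice of digits occurs for some $k$. So we may pick any $m$ sign vectors $v_1,\ldots,v_m\in\{\pm1\}^N$ as the columns of $A_{\mathbf{j}}$. Equation \eqref{4} then becomes
\[
\sum_{i=1}^m \lambda_i v_i v_i^T = E_N .
\]

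Next, we make a concrete choice of vectors. Let $\mathbf 1=(1,\ldots,1)^T$, let $e_k$ be the standard basis vectors, and let $J=\mathbf 1\mathbf 1^T$ be the all-ones matrix. We take
\[
v_0=\mathbf 1, \qquad u_k=\mathbf 1-2e_k \quad (k=1,\ldots,N),
\]
with weight $a$ on $v_0$ and a common weight $b$ on every $u_k$. Expanding,
\[
u_ku_k^T=J-2e_k\mathbf 1^T-2\mathbf 1e_k^T+4e_ke_k^T .
\]
Summing over $k$ and using $\sum_k e_k\mathbf 1^T=\sum_k \mathbf 1e_k^T=J$ gives
\[
aJ+b\sum_{k=1}^N u_ku_k^T=\bigl(a+(N-4)b\bigr)J+4bE_N .
\]

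Now choose $b=\tfrac14$ and $a=-\tfrac{N-4}{4}$. The coefficient of $J$ vanishes and the right-hand side equals $E_N$, so \eqref{4} holds with $m=N+1$. The condition $\sum\lambda_i=1$ from \eqref{eq:rademacher_condition} is then automatic, since it is the $(1,1)$ entry of \eqref{4}; directly, $a+Nb=1$. The weight $a$ is negative for $N>4$, which is permitted because the statement concerns $m$, not $m_+$.

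For $N=4$ we get $a=0$, so the construction actually uses only $N$ points. That is still consistent with the bound, since $m(\mathbf{R}_N)\le N\le N+1$ (and it matches Theorem~\ref{theorem-1}). For $N\le 3$ the same formulas apply without change.

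The only step needing care is the choice of vectors. A generic choice of $N+1$ sign vectors leaves $\tfrac{N(N+1)}2$ equations for only $N+1$ unknown weights, so a symmetric ansatz is needed to make the system solvable. The family "all ones plus single sign flips" reduces everything to the two-dimensional span of $J$ and $E_N$. After that, the remaining verification is the routine computation above.
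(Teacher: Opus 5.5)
Your proof is correct and is essentially the paper's construction. The paper also puts weight $\frac14$ on $N$ sign vectors and weight $\frac{4-N}{4}$ on one further vector, and its configuration becomes yours after flipping the signs of $r_1,\dots,r_{N-1}$ (a symmetry that preserves $E_N$) and negating one column. The only difference is that you verify $A_{\mathbf{j}}\Lambda_{\mathbf{j}}A_{\mathbf{j}}^T=E_N$ through the identity $\sum_k u_ku_k^T=(N-4)J+4E_N$, whereas the paper checks it entrywise in three cases; your route is cleaner and less error-prone.
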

\begin{proof}
We provide an explicit construction for $N+1$ points. Let us consider dyadic intervals
\[
\Delta_{i_{1}} =(0,0,\dots,0,0),
\]
\[
\Delta_{i_2} = (0,0,1,\dots,1),
\]
\[
\ldots 
\]
\[
\Delta_{i_{k}}=\left(0,\,1,\,\dots,\,1,\;\underset{\substack{\uparrow\\ k}}{0},\;1,\,\dots,\,1\right),
\]
\[
\ldots 
\]
\[
\Delta_{i_{N}} =\left(0,\,1,\,\dots,\,1,\;\underset{\substack{\uparrow\\ N}}{0}\right).
\]
\[
\Delta_{i_{N+1}} = (0,1,1,\dots,1),
\]

Let us define
\[
\lambda_1=\cdots=\lambda_{N}=\frac{1}{4}, \qquad \lambda_{N+1}=\frac{4-N}{4}.
\]

Then we show that equality \eqref{4} holds, i.e.:

\[
\begin{pmatrix}
1 & 1 & 1 & \cdots & 1 \\
1 & 1 & -1 & \cdots & -1 \\
1 & -1 & 1 & \cdots & -1 \\
\vdots & \vdots & \vdots & \ddots & \vdots \\
1 & -1 & -1 & \cdots & -1
\end{pmatrix} \begin{pmatrix}
\frac{1}{4} & 0 & 0 & \cdots & 0 \\
0 & \frac{1}{4} & 0 & \cdots & 0 \\
0 & 0 & \frac{1}{4} & \cdots & 0 \\
\vdots & \vdots & \vdots & \ddots & \vdots \\
0 & 0 & 0 & \cdots & \frac{4-N}{4}
\end{pmatrix} 
\begin{pmatrix}
1 & 1 & 1 & \cdots & 1 \\
1 & 1 & -1 & \cdots & -1 \\
1 & -1 & 1 & \cdots & -1 \\
\vdots & \vdots & \vdots & \ddots & \vdots \\
1 & -1 & -1 & \cdots & -1
\end{pmatrix} =  \begin{pmatrix}
1 & 0 & 0 & \cdots & 0 \\
0 & 1 & 0 & \cdots & 0 \\
0 & 0 & 1& \cdots & 0 \\
\vdots & \vdots & \vdots & \ddots & \vdots \\
0 & 0 & 0 & \cdots & 1
\end{pmatrix}.
\]
Indeed, there are three cases:
\[
\begin{cases}
\displaystyle
\sum_{k=1}^{N+1} r_l^2(\Delta_{i_{k}}) = \sum_{k=1}^{N} \lambda_k + \lambda_{N+1}
= \frac{N}{4} + \frac{4-N}{4} = 1, \\[6pt]
\displaystyle
\sum_{k=1}^{N+1} r_0(\Delta_{i_{k}})r_l(\Delta_{i_{k}}) = \frac{4-N}{4} + \frac{1}{4} + \frac{1}{4} - \frac{N-2}{4} =0, \; \; \; l\neq 0 \\[6pt]
\displaystyle
\sum_{k=1}^{N+1} r_j(\Delta_{i_{k}})r_l(\Delta_{i_{k}}) = \frac{1}{4} - \frac{1}{4} - \frac{1}{4} + \frac{N-3}{4} + \frac{4-N}{4} = 0, \; \; \; r_j(\Delta_{i_{N+1}}) = r_l(\Delta_{i_{N+1}}) =-1.
\end{cases}
\]
\end{proof}
\begin{corollary}
\[m(\mathbf{R}_N)=
\begin{cases}
N,
&
\text{if a Hadamard matrix of the order }N \text{exist},
\\[1ex]
N+1,
&
\text{if a Hadamard matrix of order }N\text{ does not exist}.
\end{cases}
\]
In the case $m(\mathbf{R}_N) = N$, all weights are positive and equal to each other. In the case $m(\mathbf{R}_N) = N+1$, there is an example with one weight being negative, while the remaining $N$ weights are equal to each other and equal to $\frac{1}{4}$.
\end{corollary}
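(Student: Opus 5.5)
The plan is to combine the three earlier results. First I will settle what to minimize over. Any admissible node set $\{\xi^i\}$ with weights $\lambda_i$ gives, by \eqref{eq:rademacher_condition}, an equation of the form \eqref{4}. If two nodes lie in the same dyadic interval $\Delta_k$, they produce identical columns of $A$, so I can merge them into one node with the summed weight, and nodes of zero weight can be dropped. Both operations keep \eqref{4} valid and do not increase $m$. Hence $m(\mathbf{R}_N)$ is the least $m$ for which \eqref{4} is solvable with distinct indices $\mathbf{j}$. Lemma~\ref{lemma-1} then gives $m(\mathbf{R}_N)\ge N$, and Theorem~\ref{theorem-2} gives $m(\mathbf{R}_N)\le N+1$. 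So $m(\mathbf{R}_N)\in\{N,N+1\}$.

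Next comes the case split, which rests on Theorem~\ref{theorem-1}. If a Hadamard matrix of order $N$ exists, then $m(\mathbf{R}_N)=N$. One point needs checking here: every $\pm1$ column vector of length $N$ whose first entry is $+1$ occurs as a column of $A$, since $r_0\equiv 1$ and $r_1,\dots,r_{N-1}$ take all sign patterns over the $2^{N-1}$ intervals. After normalizing the Hadamard matrix so that its first row is all ones (multiply columns by $-1$), its columns are therefore distinct columns of $A$, and $A_{\mathbf{j}}=H_N$ works. If no Hadamard matrix of order $N$ exists, then Theorem~\ref{theorem-1} rules out $m=N$, and the bound above forces $m(\mathbf{R}_N)=N+1$.

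The statements about the weights follow directly. When $m=N$, Theorem~\ref{theorem-1} shows that every solution has $\lambda_i=1/N>0$. When $m=N+1$, the construction in Theorem~\ref{theorem-2} gives $N$ weights equal to $\tfrac14$ and one weight equal to $\tfrac{4-N}{4}$. This weight is negative for $N>4$, and that covers every $N$ without a Hadamard matrix: orders $1$, $2$ and $4$ all have Hadamard matrices, and $N=3$ can be checked separately, where the construction gives weight $\tfrac14>0$.

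I expect the main obstacle to be this small-$N$ bookkeeping. The claim of a negative weight must be read for $N\ge5$, and for $N=3$ the example actually has all weights positive, so I will state that exception explicitly.
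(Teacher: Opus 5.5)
Your proposal is correct and follows the paper's implicit argument: Lemma 1 and Theorem 2 give $N\le m(\mathbf{R}_N)\le N+1$, Theorem 1 decides which value occurs, your node-merging and realizability checks fill in details the paper skips, and your $N=3$ caveat is a genuine correction of the statement, since the weights sum to $1$, so any example with $N$ weights equal to $\frac14$ has last weight $\frac{4-N}{4}$, which equals $\frac14>0$ for $N=3$, and the negative-weight claim therefore holds only for $N\ge5$. One small slip is that with the paper's definition $r_0$ is not identically $1$ (it is $+1$ on $[0,\frac12)$ and $-1$ on $[\frac12,1)$), so every vector in $\{\pm1\}^N$ is already a column of $A$ and your normalization of $H_N$ is unnecessary, though harmless because $A_{\mathbf{j}}\Lambda_{\mathbf{j}}A_{\mathbf{j}}^T=\sum_i\lambda_i a_i a_i^T$ does not change when a column $a_i$ is replaced by $-a_i$.
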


The following Theorem \ref{theorem-3} shows that for certain $N$, it is impossible to choose strictly positive weights for the minimal number of points $m(\mathbf{R}_N)$.
\begin{theorem} \label{theorem-3}
  Let $N\equiv1\pmod4,\; N\ge5$ or $N\equiv2\pmod4,\; N\ge6$, then $m_+(\mathbf{R}_N)>m(\mathbf{R}_N)=N+1$.
\end{theorem}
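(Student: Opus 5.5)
The plan has two parts: first pin down $m(\mathbf{R}_N)$, then rule out positive weights with $m(\mathbf{R}_N)$ points.

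\textbf{Step 1: $m(\mathbf{R}_N)=N+1$.} A Hadamard matrix of order $N>2$ exists only if $4\mid N$. Hence for $N\equiv1,2\pmod 4$ with $N\ge5$ there is none. By Theorem~\ref{theorem-1} and Theorem~\ref{theorem-2} (equivalently, the Corollary) we get $m(\mathbf{R}_N)=N+1$. It remains to show that no choice of $N+1$ dyadic intervals with weights $\lambda_i>0$ satisfies \eqref{4}.

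\textbf{Step 2: reduction to a Gram-matrix condition.} Assume for contradiction that $A_{\mathbf{j}}\Lambda_{\mathbf{j}}A_{\mathbf{j}}^T=E_N$ holds with $A_{\mathbf{j}}$ an $N\times(N+1)$ $\pm1$ matrix whose first row is all ones (since $r_0\equiv1$) and with all $\lambda_i>0$.
\begin{itemize}
\item Put $B=A_{\mathbf{j}}\Lambda_{\mathbf{j}}^{1/2}$. This is possible only because the weights are positive, and this is exactly where positivity enters.
\item The rows of $B$ are orthonormal, so $B$ can be completed by one unit row $u$ to an orthogonal $(N+1)\times(N+1)$ matrix. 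The columns of that matrix are then orthonormal.
\item Writing $a_i$ for the columns of $A_{\mathbf{j}}$ and $v_i=u_i/\sqrt{\lambda_i}$, column orthonormality reads
\[
N+v_i^2=\frac1{\lambda_i},\qquad \langle a_i,a_j\rangle=-v_iv_j\quad(i\ne j).
\]
\end{itemize}
Thus the Gram matrix of the $\pm1$ columns has the rigid form $A_{\mathbf{j}}^TA_{\mathbf{j}}=\operatorname{diag}(N+v_i^2)-vv^T$, with all off-diagonal entries equal to $-v_iv_j$. In the negative-weight example of Theorem~\ref{theorem-2} this completion is only possible in an indefinite metric, which explains why that example escapes the argument.

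\textbf{Step 3: arithmetic contradiction.} This is the main obstacle. The numbers $-v_iv_j=\langle a_i,a_j\rangle$ are integers congruent to $N\pmod 2$. For any three $\pm1$ vectors of length $N$ there is the standard congruence
\[
\langle a_i,a_j\rangle+\langle a_j,a_k\rangle+\langle a_k,a_i\rangle\equiv -N \pmod 4 .
\]
Substituting gives $v_iv_j+v_jv_k+v_kv_i\equiv N\pmod 4$ for every triple. The two cases then go as follows.
\begin{itemize}
\item \emph{Case $N$ odd.} Every product $v_iv_j$ is an odd integer, so all $v_i\neq0$, and $v_i^2=(v_iv_j)(v_iv_k)/(v_jv_k)$ is a positive rational. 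I would first try to show all $v_i$ share the value $\pm\sqrt{c}$ up to sign and that $c$ is an odd integer. Pigeonhole on the signs of the $v_i$ among $N+1\ge6$ indices should then give a triple where the three products have a common sign. That contradicts the congruence above, combined with the constraint $\sum_i v_i^2/(N+v_i^2)=1$ coming from $\det(A_{\mathbf{j}}^TA_{\mathbf{j}})=0$, since $\operatorname{rank}A_{\mathbf{j}}\le N$.
\item \emph{Case $N\equiv2\pmod4$.} All products are even, so at most one $v_i$ can be odd-like. I would analyse which $v_i$ vanish: a vanishing $v_i$ means $a_i$ is orthogonal to all other columns. I would then use the same triple congruence, with right-hand side $\equiv2\pmod4$, to exclude all configurations.
\end{itemize}

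I expect the bookkeeping in Step~3 to be the delicate part. In particular I must check that the small cases $N=5,6$ are genuinely excluded, using that the first row of $A_{\mathbf{j}}$ is all ones, which forces $\sum_i\lambda_i=1$ and hence $\sum_i\lambda_i a_i$ to equal the first unit vector. If the pure congruence argument does not close, I would fall back on determinant considerations for $A_{\mathbf{j}}^TA_{\mathbf{j}}$, in the spirit of the Ehlich/Wojtas bounds, to force $v$ into a form incompatible with integrality.
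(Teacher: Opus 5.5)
Your Steps 1 and 2 coincide with the paper: the same orthogonal completion $\widetilde B$ and the same column relations, only rescaled by $v_i=u_i/\sqrt{\lambda_i}$, so that $1-N\lambda_i=\lambda_i v_i^2$. The gap is Step 3, which carries the whole proof and is only sketched. The tools you name there, integrality of $v_iv_j$ and the mod-$4$ triple congruence, cannot close it on their own.

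\emph{Odd case.} For $N\equiv1\pmod4$, take $v=(1/\sqrt3,\pm\sqrt3,\dots,\pm\sqrt3)$. All products are odd integers and every triple sum is $\equiv1\pmod4$. So your claim that all $|v_i|$ coincide does not follow from integrality and the congruence, and you give no other argument for it. Even equal magnitudes $v_i=\pm\sqrt3$ pass the congruence; you pin $c=1$ with the normalization only after assuming equal magnitudes, which is the crux.

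\emph{Case $N\equiv2\pmod4$.} Both $v_i=\pm\sqrt2$ for all $i$ and $v=(0,\pm\sqrt2,\dots,\pm\sqrt2)$ pass every parity test and every triple test. The phrase ``at most one $v_i$ can be odd-like'' has no meaning, since the $v_i$ are real numbers. So using ``the same triple congruence to exclude all configurations'' fails as stated.

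What excludes these configurations is the normalization $\sum_i\lambda_i=\sum_i 1/(N+v_i^2)=1$. Exploiting it requires a quantitative inequality, and that is the missing idea; the paper supplies it as follows.

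\emph{Odd $N$.} You have $|v_kv_l|=|\langle a_k,a_l\rangle|\ge1$, i.e.\ $u_k^2u_l^2\ge\lambda_k\lambda_l$. Combined with $\sum_k u_k^2=1$ and $\sum_k\lambda_k=1$, this gives $(u_l^2-\lambda_l)(1-u_l^2-\lambda_l)\ge0$. Since $1-u_l^2-\lambda_l=(N-1)\lambda_l>0$, this yields $u_l^2\ge\lambda_l$ and hence $\lambda_l\le 1/(N+1)$ for every $l$. The sum condition then forces all $\lambda_l=1/(N+1)$, i.e.\ $v_l^2=1$. From here your congruence does finish the case, since every triple sum lies in $\{3,-1\}$, which is $\not\equiv N\pmod4$. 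The paper instead notes that $\widetilde A$ is then a Hadamard matrix of order $N+1\equiv2\pmod4$.

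\emph{$N\equiv2\pmod4$.} Nonzero $v_k,v_l$ give $|v_kv_l|\ge2$, i.e.\ $(1-N\lambda_k)(1-N\lambda_l)\ge4\lambda_k\lambda_l$. Sum this over such pairs, apply Cauchy--Schwarz, and use $\lambda_k<1/N$. This gives $S\le r/(N+2)$ for the total weight $S$ of the $r$ indices with $v_k\ne0$. Since $\lambda_k=1/N$ when $v_k=0$, the condition $\sum_k\lambda_k=1$ then forces $r\le N/2+1$. Hence at least $N/2\ge3$ of the $v_k$ vanish, and the corresponding three columns are pairwise orthogonal. Only at this point does the triple congruence ($0\not\equiv2\pmod4$) give the contradiction.

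Without these two counting arguments, your Step 3 is a plan rather than a proof.
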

\begin{proof}
    When
\[
N \equiv 1,2 \pmod{4}, \qquad N > 2,
\]
a Hadamard matrix of order $N$ does not exist. Consequently,
\[
m(\mathbf{R}_N) = N+1.
\]
It remains to prove that, for $N$ satisfying the theorem’s condition, one cannot choose $N+1$ points such that all weights are positive.

Suppose the opposite. Let there exist 
\[
\Delta_{j_{1}},\dots,\Delta_{j_{N+1}},
\qquad
\lambda_1,\dots,\lambda_{N+1}>0,
\]
such that for the corresponding matrices
\[
A_\mathbf{j}\Lambda_\mathbf{j} A^T_\mathbf{j}=E_N.
\]

Let us define
\[
  B=A_\mathbf{j} \Lambda^{1/2}_\mathbf{j} \in \mathit{Mat} ((N+1)\times N).
\]
Let’s extend matrix $B$ to an $(N+1) \times (N+1)$ matrix by adding a row vector $u$ of unit length that is orthogonal to all rows of matrix $B$. The resulting matrix
$$
\widetilde{B} = \begin{pmatrix} B \\ u \end{pmatrix} \in \mathit{Mat}((N+1)\times(N+1))
$$
is an orthogonal matrix: 
\begin{equation} \label{Ort}
    E_{N+1}= \widetilde{B}^{T}\widetilde{B} = B^{T} B + u^{T} u.
\end{equation}
From condition \eqref{Ort}, we obtain the following system:
\begin{equation} \label{eqv-syst}
    \begin{cases}
1 = N \lambda_k + u_k^2, \quad k =1, \ldots, N+1 \\[6pt]
0 = \sqrt{\lambda_k \lambda_l} \sum_{i=0}^{N-1}r_i(\Delta_{j_k})r_i(\Delta_{j_l}) + u_k u_l, \quad k \neq l.
\end{cases}
\end{equation}
In the case $N \equiv1\pmod4$ we have $\sum_{i=0}^{N-1}r_i(\Delta_{j_k})r_i(\Delta_{j_l}) \ge 1 $. Then, taking into account the second equation in \eqref{eqv-syst}, we have $u_k^2 u_l^2 \ge \lambda_k \lambda_l$. Next, multiplying the equality $\sum_{k=1}^{N+1} \lambda_k = 1$ by $\lambda_l$, we obtain
\begin{equation}  \label{eqv-ineqv}
u_l^2 - u_l^4 + \lambda_l^2 = \sum_{k=1}^{N+1} u_k^2 u_l^2 - u_l^4 + \lambda_l^2 \ge \sum_{k=1}^{N+1} \lambda_k \lambda_l =  \lambda_l.
\end{equation}
Then from \eqref{eqv-ineqv} we obtain, that 
\begin{equation}  \label{eqv-ineqv-2}
(u_l^2-\lambda_l)(1-u_l^2-\lambda_l) \ge 0.
\end{equation}
Moreover, it is easy to verify that $(1 - u_l^2 - \lambda_l) > 0$. Otherwise, $u_k^2 + \lambda_k \ge 1$, and substituting $u_k^2 = 1 - N \lambda_k$ from \eqref{eqv-syst}, we would obtain in this case that $\lambda_k \le 0$, which contradicts the assumption that the weights are positive. Then
\begin{equation}  \label{eqv-ineqv-3}
u_l^2 \ge \lambda_l.
\end{equation}
Substituting \eqref{eqv-ineqv-3} into the first equation in \eqref{eqv-syst}, we obtain that $\lambda_k \le \frac{1 - \lambda_k}{N}$, i.e.
\[
\lambda_k  \le \frac{1}{N+1} \qquad \forall k \in \{1, \ldots, N+1\}.
\]
Taking into account that $\sum_{k=1}^{N+1} \lambda_k = 1$, we obtain that 
\[
\lambda_k  =  \frac{1}{N+1},  \quad u_k = \frac{1}{N+1}, \qquad \forall k \in \{1, \ldots, N+1\}.
\]
Let us set $\widetilde{u}_k := \sqrt{N+1}\, u_k$. Then, taking into account \eqref{Ort},  
\[
E_{N+1} = \widetilde{B}^{T} \widetilde{B} = \frac{1}{N+1}\widetilde{A}^{T} \widetilde{A}, \quad \text{где} \; \widetilde{A} =\begin{pmatrix}
A_\mathbf{j} \\
\widetilde{u}
\end{pmatrix}.
\]
Thus, $\widetilde{A}$ is a Hadamard matrix, which implies that $4 \mid (N+1)$. However, this is impossible since $N \equiv 1 \pmod{4}$. We obtain a contradiction, and therefore there cannot be $N+1$ positive weights. The case $N \equiv 1 \pmod{4}$ is proved.

In the case $N \equiv 2 \pmod{4}$, for $u_k \neq 0$ and $u_l \neq 0$, we have $\sum_{i=0}^{N-1}r_i(\Delta_{j_k})r_i(\Delta_{j_l}) \ge 2$. Then, taking into account the second equation in \eqref{eqv-syst}, $u_k^2 u_l^2 \ge 4\lambda_k \lambda_l$. Taking into account the first equation in \eqref{eqv-syst},
\begin{equation} \label{eqv-3}
(1-N\lambda_k)(1-N\lambda_l) \ge 4 \lambda_k \lambda_l. 
\end{equation}
Let $r$ denote the number of $u_k$ that are non‑zero, and $S=\sum\limits_{k\colon u_k \neq 0}\lambda_k$. Summing \eqref{eqv-3} over the pairs $(k, l)$ such that $k \neq l$, $u_k \neq 0$, and $u_l \neq 0$, we obtain

\begin{equation} \label{eqv-4}
    r(r-1) - 2N(r-1)S + (N^2 - 4)\left(S^2 - \sum\limits_{k\colon u_k \neq 0}\lambda_k^2\right) \geq 0
\end{equation}

Using the Cauchy–Bunyakovsky–Schwarz inequality,
\[
S^2 - \sum\limits_{k\colon u_k \neq 0} \lambda_k^2 \leq S^2 - \frac{S^2}{r} = S^2 \cdot \frac{r-1}{r}
\]

We substitute into the inequality \eqref{eqv-4}:
\[
r(r-1) - 2N(r-1)S + (N^2 - 4)S^2 \cdot \frac{r-1}{r} \geq 0
\]

Taking into account the estimate from the first equation in \eqref{eqv-syst}, namely $\lambda_k < \frac{1}{N}$ for $u_k \neq 0$, we obtain the system
\begin{equation} \label{eqv-syst-2}
    \begin{cases}
r^2 - 2NrS + (N^2 - 4)S^2 \geq 0 \\[6pt]
S \le \frac{r}{N}.
\end{cases}
\end{equation}
We obtain that $S \le \frac{r}{N+2}$ (the roots of the quadratic equation in \eqref{eqv-syst-2} are $S = \frac{r}{N \pm 2}$). Since for $u_k = 0$ we have $\lambda_k = \frac{1}{N}$, we get the following 
\begin{equation} \label{eqv-5}
1 = \sum_{k=1}^N \lambda_k = S + \frac{N+1-r}{N}\le \frac{r}{N+2} + \frac{N+1-r}{N}.
\end{equation}
From \eqref{eqv-5} we obtain that $r \le \frac{N}{2} +1$.

Then since $N \ge 6$, exists $u_{k_1} = u_{k_2} = u_{k_3} =0$.  Using the second equation in  \eqref{eqv-syst},
\begin{equation} \label{w}
    \sum_{i=0}^{N-1} \sum_{p,q=1,\atop p\neq q}^{3} r_i(\Delta_{j_{k_p}})r_i(\Delta_{j_{k_q}}) = \sum_{p,q=1,\atop p\neq q}^{3}\sum_{i=0}^{N-1}r_i(\Delta_{j_{k_p}})r_i(\Delta_{j_{k_p}}) =0.
\end{equation}
We compute the inner sum in \eqref{w}:
\begin{equation} \label{ww}
\sum_{p,q=1,\atop p\neq q}^{3} r_i(\Delta_{j_{k_p}})r_i(\Delta_{j_{k_q}}) =     \begin{cases}
3, \quad \text{if $r_i(\Delta_{j_{k_1}})=r_i(\Delta_{j_{k_2}}) = r_i(\Delta_{j_{k_3}})$}, \\[6pt]
-1,  \quad \text{otherwise}.
\end{cases}
\end{equation}
Let the number of indices $i$ of the first type from \eqref{ww} be equal to $M$. Then, from \eqref{w}, we obtain
\begin{equation} \label{www}
3M - (N-M) =0.
\end{equation}
From \eqref{www}, we obtain that $4 \mid N$, which contradicts the condition $N \equiv 2 \pmod{4}$. Thus, this case is also proved.
\end{proof}
\begin{remark}
 In this problem, the size of the gap between $m_+({\bf R}_N )$ and $m({\bf R}_N)$ is closely tied to the Hadamard conjecture. If the Hadamard conjecture is true, then Hadamard matrices exist in every order divisible by 4, which implies the uniform bound
\[
m_+({\bf R}_N )-m({\bf R}_N )\le 2.
\]
\end{remark}

\end{document}